\newtheorem{definition}{Definition}
\newtheorem{theorem}{Theorem}
\newtheorem{lemma}{Lemma}
\newtheorem{corollary}{Corollary}
\begin{document}

\title{A Probabilistic Characterization of g-Harmonic Functions}
\author{Liang Cai \\
        Department of Mathematics, Beijing Institute of Technology \\ Beijing
        100081, People's Republic of China\\
        E-mail: cailiang@bit.edu.cn}
\maketitle

\begin{abstract}
This paper gives a definition of g-harmonic functions and shows the
relation between the g-harmonic functions and g-martingales. It's
direct to construct such relation under smooth case, but for
continuous case we need the theory of viscosity solution. The
results show that under the nonlinear expectation mechanism, we also
can get the similar relation between harmonic functions and
martingales. Finally, we will give a result about the strict
converse problem of mean value property of g-harmonic functions.

{\bf{Key words or phrases}}: BSDE, g-martingale, g-harmonic
function, nonlinear Feynman-Kac formula, viscosity solution.
\end{abstract}

%%%%%%%%%%%%%%%%%%%%%%%%%%%%%%%%%%%%%%%%%%%%%%%%%%%%%%%%%%%%%%%%%%%%%%%%%%%%%%%%
\section{Introduction and Preliminary}
Harmonic function ($\Delta u=0$)  has a probabilistic interpretation
as that if $\Delta u=0$ on $R^n$, then $u(B^x_t)$ is a martingale
for any $x\in R^n$ (see for example \cite{oksendal1}). This relation
between martingale and harmonic function connects probability with
potential analysis. It helps us to give probabilistic
characterizations for harmonic function and more generalized
X-harmonic function(see \cite{oksendal1}). In 1997,
Peng(\cite{Peng1}) introduced the notions of g-expectation and
conditional g-expectation via a backward stochastic differential
equation(BSDE) with a generator function $g$. Further,
Peng(\cite{Peng2}) introduced the notion of g-martingale which
provided an heuristic tool to characterize some kind of harmonic
function described by elliptic operator with a nonlinear term $g$ by
which we will define g-harmonic function. In this paper, with the
help of nonlinear Feynman-Kac formula established from BSDE(see for
example \cite{Peng3}), we will give a probabilistic characterization
of the g-harmonic function.

Now we state our problem in detail. Let $(\Omega,\mathcal{F},P)$ be
a probability space endowed with the natural filtration
$\{\mathcal{F}_t\}_{t\geq0}$ generated by an n-dimensional Brownian
motion $\{B_t^x\}_{t\geq0}$. i.e.
$$\mathcal{F}_t=\sigma\{B_s: s\leq t\}.$$
Then we can define a g-martingale by an $\mathcal{F}_t$-adapted
process $\{y_t\}_{t\geq0}$ which satisfies the following BSDE for
any $0\leq s\leq t$:
\begin{align}\label{BSDE}
y_s=y_t+\int^t_sg(y_r,z_r)dr-\int^t_sz_rdB_r.
\end{align}
Here $g:R\times R^n\longrightarrow R$, satisfies the condition:\\
(H1). $g(y,0)\equiv 0$ and the Lipschitz condition: $\exists C>0$,
for any $(y_1,z_1),(y_2,z_2)\in R \times R^n$ we have
\begin{align}
|g(y_1,z_1)-g(y_2,z_2)|\leq C(|y_1-y_2|+|z_1-z_2|).\notag
\end{align}
And the equality(\ref{BSDE}) also can be formulated simply
as\cite{Peng4}:
\begin{align}
\mathcal{E}^g_{s,t}(y_t):
=y_s.\notag
\end{align}
Then we can also get the definition of g-super(sub)martingale when
\begin{align}
\mathcal{E}^g_{s,t}(y_t)\leq (\geq)\  y_s.\notag
\end{align}
This definition derives from the definition of g-expectation in the
beginning paper Peng\cite{Peng1}. When $g(y,z)\equiv 0$ the
g-expectation is actually the classical expectation. Except that
g-expectation is nonlinear in general, it holds many other important
properties as its classical counterpart
\cite{ChenPeng1}\cite{Jiang}\cite{Peng2}\cite{Wangwei}.

Given an n-dimensional It\^{o} diffusion process$\{X^x_t\}_{t\geq
0}$:
\begin{align}\label{diffusion}
dX^x_t&=b(X^x_t)dt+\sigma(X^x_t)dB_t,  \\
 X^x_0&=x \in R^n \notag ,
\end{align}
 where $\  b(x): R^n\longrightarrow R^n,\  \sigma(x): R^n\longrightarrow
R^{n\times n}$ satisfy the Lipschitz condition: $\exists C>0$ s.t.
\begin{align}
|b(x_1)-b(x_2)|+|\sigma(x_1)-\sigma(x_2)|\leq C|x_1-x_2|\ \ \
\forall x_1, x_2\in R^n,\notag
\end{align}
our problem is that: what kind of function $u(x): R^n\longrightarrow
R$ satisfies that $u(X^x_t)$ is a g-martingale for any $x\in R^n$?

This problem also has its classical counterpart:

First if $\{X^x_t\}$ is just the Brownian motion $\{B^x_t\}$, then
we have the result that when $u(x)$ is harmonic on $R^n$ i.e.
\begin{align}
\Delta u=\Sigma_i\frac{\partial^2 u}{\partial x_i^2}=0\ \ \ \ \mbox
{for any}\  x\in R^n, \notag
\end{align}
the process $u(B^x_t)$ is a martingale for any x. And conversely if
$u(x)$ satisfies that $u(B_t^x)$ is a martingale for any x, then
$u(x)$ must be harmonic on $R^n$. The proof may have many editions,
here we can give a sketch of one which may induce the extension to
g-martingale case.

If $u(x)$ is harmonic on $R^n$, then we use It\^{o} formula to
$u(B^x_t)$ and get
\begin{align}
du(B^x_t)&=\sum_i\frac{\partial u}{\partial
x_i}(B_t^x)dB_{i,t}+\frac{1}{2}\sum_i\frac{\partial^2 u}{\partial
x_i^2}(B^x_t)dt\notag\\
&=\sum_i\frac{\partial u}{\partial x_i}(B_t^x)dB_{i,t}.\notag
\end{align}
Then we get $u(B^x_t)$ is a martingale for any $x\in R^n$.
Conversely if $u(x)$ is continuous on $R^n$ and  for any $x\in R^n$,
$u(B^x_t)$ is a martingale, then we have $E[u(B^x_\tau)]=u(x)$ for
any stopping time $\tau$. Particularly for any sphere $S(x,r)=\{y\in
R^n: |y-x|< r\}$, we have
\begin{align}
u(x)=E[u(B^x_{\tau_{S(x,r)}})]=\int_{\partial S(x,r)}u(y)d\sigma_y\
\ \ \  \notag
\end{align}
where $\tau_{S(x,r)}$ is the exit time of $\{B^x_t\}$ from the
sphere $S(x,r)$, i.e.
$$\tau_{S(x,r)}=inf\{t>0: |B^x_t-x|\geq r\} ,$$
 and $\sigma_y$ is the harmonic measure on
the $\partial S(x,r) $. Then from the familiar converse of the mean
value property for harmonic function, we can get $u(x)$ must be
harmonic function.

Further we can extend the Brownian motion $\{B^x_t\}$ to the general
diffusion process $\{X^x_t\}$:

If $u(x)\in C_0^2(R^n)$ and satisfies
\begin{align}\label{X-harmonic}
\sum_{i}b_i\frac{\partial u}{\partial
x_i}(x)+\frac{1}{2}\sum_{i,j}(\sigma\sigma^\tau)_{i,j}\frac{\partial^2u}{\partial
x_i\partial x_j}(x)=0,
\end{align}
then we have $u(X^x_t)$ is a martingale for any x. The proof also
uses the It\^{o} formula. But conversely if $u(X^x_t)$ is a
martingale for any x, we can't conclude that $u(x)$ is smooth. Then
with additional assumption $u(x)\in C_0^2(R^n)$ we can get that
$u(x)$ satisfies the PDE(\ref{X-harmonic})(see \cite{oksendal1}).

Then naturally we will ask that what happens when we substitute the
expectation mechanism by the g-expectation mechanism. First we will
define the infinitesimal generator:
\begin{definition}
Let
\begin{align}
\mathcal{A}^X_gf(x):=\lim_{t\downarrow
0}\frac{\mathcal{E}^g_{0,t}[f(X^x_t)]-f(x)}{t},
\end{align}
then we call $\mathcal{A}^X_g$ the infinitesimal generator of a
diffusion process $\{X^x_t\}$ under g-expectations.
\end{definition}
Thanks to the celebrating nonlinear Feynman-Kac formula
\cite{Peng3}, we can get the explicit form of $\mathcal{A}^X_g$ when
$f \in C_0^2(R^n)$ by considering the following type of quasilinear
parabolic PDE:
\begin{equation}\label{quasilinear parabolic PDE}
\begin{cases}
 &\frac{\partial u}{\partial t}-\mathcal{L}u(t,x)-g(u(t,x),u_x(t,x)\sigma(x))=0,\\
 &u(0,x)=f(x).
\end{cases}
\end{equation}

Where
\begin{align}\label{mathcalL}
\mathcal{L}u(t,x)=\sum_{i}b_i\frac{\partial u}{\partial
x_i}(t,x)+\frac{1}{2}\sum_{i,j}(\sigma\sigma^\tau)_{i,j}\frac{\partial^2u}{\partial
x_i\partial x_j}(t,x).
\end{align}
When $f\in C_0^2(R^n)$, we assert that
\begin{align}\label{u(t,x)}
u(t,x)=\mathcal{E}^g_{0,t}[f(X^x_t)]
\end{align}
is the solution of PDE(\ref{quasilinear parabolic PDE}). Then under
the case $t=0$, we get
\begin{align}\label{smooth A}
\mathcal{A}^X_gf(x)=\mathcal{L}f(x)+g(f(x),f_x(x)\sigma(x)).
\end{align}
Then we finish the preliminary and we can introduce our main
results. In section 2, we give a characterization of g-harmonic
function under smooth case. In section 3, we characterize it under
continuous case, where the differential operator is interpreted as
viscosity solution. In section 4, we will investigate the strict
converse problem of mean value property of g-harmonic function
evoked by its classical counterpart \cite{oksendal}.

%%%%%%%%%%%%%%%%%%%%%%%%%%%%%%%%%%%%%%%%%%%%%%%%%%%%%%%%%%%%%%%%%%%%%%%%%%%%%%%%
\section{Smooth Case}

The equality (\ref{smooth A}) has the implication about the relation
between the g-martingales and the g-harmonic functions when $f\in
C_0^2(R^n)$. In fact, the left side of (\ref{smooth A}) is related
to a g-martingale and the right side is related to a harmonic PDE.
At first we will give the definition of g-harmonic functions:

\begin{definition}
Let $f\in C_0^2(R^n)$. We call it a g-(super)harmonic function
w.r.t. $\{X^x_t\}$ if it satisfies
\begin{align}\label{g-harmonic def}
\mathcal{A}^X_gf(x)(\leq)=0\quad\quad \mbox{for any}\quad x\in R^n.
\end{align}
\end{definition}

Then we suffice to construct the relation between the
g-supermartingales and the g-superharmonic functions.

\begin{theorem}
If $f(x)\in C_0^2(R^n)$, then the following assertions are equivalent:\\
(1)$f(x)$ is a g-superharmonic function.\\
(2)$\{f(X^x_t)\}$ is a g-supermartingale for any $x\in R^n$.
\end{theorem}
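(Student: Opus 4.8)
The plan is to prove the equivalence by exploiting the nonlinear Feynman--Kac machinery already assembled in the preliminaries, reducing both implications to the explicit formula (\ref{smooth A}) for the generator $\mathcal{A}^X_g$ together with standard properties of the conditional $g$-expectation. The cleanest route passes through the function $u(t,x)=\mathcal{E}^g_{0,t}[f(X^x_t)]$ from (\ref{u(t,x)}), which by the Feynman--Kac assertion solves the quasilinear PDE (\ref{quasilinear parabolic PDE}).

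First I would prove $(1)\Rightarrow(2)$. Fixing $x$, I want to show $\mathcal{E}^g_{s,t}[f(X^x_t)]\leq f(X^x_s)$ for all $0\leq s\leq t$. The key observation is the Markov property of $\{X^x_t\}$ together with the flow property of conditional $g$-expectation: conditionally on $\mathcal{F}_s$, the process $\{X^x_{s+r}\}_{r\geq 0}$ behaves like a fresh diffusion started at $X^x_s$, so that $\mathcal{E}^g_{s,t}[f(X^x_t)]=u(t-s,\,X^x_s)$ with $u$ as above. Thus the supermartingale property for $f(X^x_t)$ is equivalent to $u(t-s,y)\leq f(y)=u(0,y)$, i.e.\ to $u(\tau,y)$ being nonincreasing in $\tau$ for each fixed $y$. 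Since $u$ solves the PDE, $\partial_t u(0,y)=\mathcal{L}f(y)+g(f(y),f_x(y)\sigma(y))=\mathcal{A}^X_g f(y)\leq 0$ by g-superharmonicity, and the same computation applied at a generic time (using that $u(\cdot,\cdot)$ inherits the PDE along the flow) yields $\partial_\tau u\leq 0$ throughout, giving the monotonicity and hence the g-supermartingale property.

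For the converse $(2)\Rightarrow(1)$, I would argue directly from the definition of the generator. Supermartingality gives $\mathcal{E}^g_{0,t}[f(X^x_t)]\leq f(x)$ for every $t>0$ and every $x$, so the difference quotient defining $\mathcal{A}^X_g f(x)$ in (\ref{smooth A}) is $\leq 0$ for each $t$; passing to the limit $t\downarrow 0$ preserves the inequality and yields $\mathcal{A}^X_g f(x)\leq 0$, which is exactly g-superharmonicity. This direction is short precisely because the smoothness hypothesis $f\in C_0^2(R^n)$ lets us use the explicit generator (\ref{smooth A}) rather than any viscosity interpretation, so no regularity has to be recovered here (in contrast to the continuous case of Section 3).

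The main obstacle I anticipate is justifying the identity $\mathcal{E}^g_{s,t}[f(X^x_t)]=u(t-s,X^x_s)$ rigorously, since it rests on combining the Markov property of the forward diffusion with the tower/flow property of the conditional $g$-expectation coming from the BSDE (\ref{BSDE}); this is where one must invoke that the BSDE solution is a deterministic functional of the terminal condition and the driving diffusion's future increments, so that conditioning on $\mathcal{F}_s$ replaces the starting point by $X^x_s$. A secondary technical point is the interchange of limit and inequality in $(2)\Rightarrow(1)$, which is routine given the Lipschitz assumption (H1) and the $C_0^2$ regularity of $f$ guaranteeing that the difference quotient converges to $\mathcal{A}^X_g f(x)$; I would handle it by noting that (\ref{smooth A}) already establishes this convergence unconditionally.
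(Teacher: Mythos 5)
Your direction $(2)\Rightarrow(1)$ is exactly the paper's argument: supermartingality makes the difference quotient nonpositive for every $t>0$, and the limit defining $\mathcal{A}^X_g f(x)$ in (\ref{smooth A}) preserves the inequality. No issue there.

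The direction $(1)\Rightarrow(2)$, however, contains a genuine gap at its central step. You reduce the supermartingale property to showing $\partial_\tau u(\tau,y)\leq 0$ for all $\tau\geq 0$, where $u(\tau,y)=\mathcal{E}^g_{0,\tau}[f(X^y_\tau)]$, and you verify this at $\tau=0$ because $\partial_\tau u(0,y)=\mathcal{L}f(y)+g(f(y),f_x(y)\sigma(y))=\mathcal{A}^X_gf(y)\leq 0$. But at a generic time $\tau>0$ the PDE (\ref{quasilinear parabolic PDE}) gives $\partial_\tau u(\tau,y)=\mathcal{L}u(\tau,\cdot)(y)+g(u(\tau,y),u_x(\tau,y)\sigma(y))$, i.e.\ the generator applied to $u(\tau,\cdot)$, \emph{not} to $f$. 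The hypothesis only says that the generator applied to $f$ is nonpositive; asserting that ``the same computation at a generic time yields $\partial_\tau u\leq 0$ throughout'' silently assumes that g-superharmonicity propagates to $u(\tau,\cdot)$ for every $\tau$, which is essentially the conclusion you are trying to prove. To close this you would need either a comparison/maximum principle argument (compare $u$ with the stationary supersolution $\tilde f(t,x)=f(x)$ of the parabolic problem --- this is precisely what the paper does later, in the viscosity setting, to prove Theorem \ref{theorem2}(ii)), or a linearization of the PDE in $\partial_\tau u$ plus a parabolic maximum principle, which demands more regularity of $u$ than the Feynman--Kac assertion provides. The paper's own proof of this direction sidesteps all of this: it applies It\^{o}'s formula to $f(X^x_t)$, rewrites $f(X^x_s)$ as the solution of the BSDE with generator $g$ plus the nonnegative correction $-\int_s^t\{\mathcal{L}f+g(f,f_x\sigma)\}(X^x_r)\,dr$, and concludes by the comparison theorem for BSDEs. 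That route uses only the pointwise sign of $\mathcal{A}^X_g f$ along the path and requires no monotonicity of $u$ in time; you should either adopt it or supply the missing comparison-principle step.
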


\begin{proof}
(i) $(1)\Rightarrow (2)$:\\
$f\in C^2(R^n)$, by It\^{o} formula, we can get $f(X^x_t)$ is still
an It\^{o} diffusion process:
$$f(X^x_t)=f(X^x_s)+\int^t_s \mathcal{L}f(X^x_r)dr+\int^t_sf_x(X^x_r)\sigma(X^x_r)dB_r\quad 0\leq s\leq t.$$
and then we insert the term $g(f(X^x_r),f_x\sigma(X^r_x))$ and get:
\begin{align}
f(X^x_s)&=f(X^x_t)-\int^t_s
\mathcal{L}f(X^x_r)dr-\int^t_sf_x\sigma(X^x_r)dB_r\notag\\
&=f(X^x_t)+\int^t_sg(f(X^x_r),f_x\sigma(X^r_x))dr-\int^t_sf_x\sigma(X^x_r)dB_r\notag\\
&\quad-\int^t_s
\{\mathcal{L}f(X^x_r)+g(f(X^x_r),f_x\sigma(X^r_x))\}dr \notag
\end{align}
$f(x)$ is a g-superharmonic function, so:
$$\mathcal{L}f(X^x_r)+g(f(X^x_r),f_x\sigma(X^x_r))=\mathcal{A}^X_gf(X^x_r)\leq 0.$$
And then according to the comparison theory of
BSDE(cf.\cite{Peng2}), we can get $\{f(X^x_t)\}$ is a
g-supermartingale.

(ii) $(2)\Rightarrow(1)$:\\
By the definition of the $\mathcal{A}^X_g$:
$$\mathcal{A}^X_gf(x)=\lim_{t\downarrow 0}\frac{\mathcal{E}^g_{0,t}[f(X^x_t)]-f(x)}{t}.$$
$\{f(X^x_t)\}$ is a g-supermartingale, so:
$$\mathcal{E}^g_{0,t}[f(X^x_t)]-f(x)\leq 0,$$
then
$$\mathcal{A}^X_gf(x)\leq 0.$$
So we get $f(x)$ is a g-superharmonic function.
\end{proof}

%%%%%%%%%%%%%%%%%%%%%%%%%%%%%%%%%%%%%%%%%%%%%%%%%%%%%%%%%%%%%%%%%%%%%%%%%%%%%%%%
\section{Continuous Case}
If we generalize the requirement of function $f(x)$ to be only
continuous on $R^n$, how we get a function $f$ which satisfies that
$f(X^x_t)$ is a g-martingale for any $x\in R^n$? With the help of
viscosity solution(cf. \cite{user guide}) we can also refer to the
quasi-linear second order PDEs. Here we need a lemma due to
Peng\cite{Peng3}.
\begin{lemma}\label{non Feynman Kac}
Let $0\leq t\leq T$ and
$$u(t,x)=\mathcal{E}^g_{0,T-t}[f(X^{x}_{T-t})].$$
Then $u(t,x)$ is the viscosity solution of the following PDE on
$(0,T)\times R^n$:
\begin{equation}\label{PDE**}
\begin{cases}
&\frac{\partial u}{\partial
t}+\mathcal{L}u(t,x)+g(u(t,x),u_x(t,x)\sigma (x))=0\\
&u(T,x)=f(x)
\end{cases}
\end{equation}
Here $g(y,z)$ and $f(x)$ satisfy:\\
(H2). Let $F(u,p)=g(u,p\sigma(x))$, then $\exists C>0$ s.t.
\begin{align}
&|F(u,p)|\leq C(1+|u|+|p|);\notag\\
&|D_uF(u,p)|, |D_pF(u,p)|\leq C,\notag
\end{align}
and (H3). $f(x)$ is a continuous function with a polynomial growth
at infinity.
\end{lemma}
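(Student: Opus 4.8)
The plan is to recognize this as the nonlinear Feynman–Kac representation of Peng and to prove it through the associated forward–backward stochastic system together with the comparison theorem for BSDEs. Because the coefficients $b,\sigma$ are time-homogeneous, I would first rewrite $u(t,x)$ in the standard decoupled form: for each starting point $(t,x)$ let $X^{t,x}$ solve the forward SDE (\ref{diffusion}) on $[t,T]$ started from $x$ at time $t$, and let $(Y^{t,x},Z^{t,x})$ solve the BSDE $Y^{t,x}_s=f(X^{t,x}_T)+\int_s^T g(Y^{t,x}_r,Z^{t,x}_r)\,dr-\int_s^T Z^{t,x}_r\,dB_r$ on $[t,T]$. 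The time-homogeneity of $X$ and the definition of the g-expectation give $u(t,x)=\mathcal{E}^g_{t,T}[f(X^{t,x}_T)]=Y^{t,x}_t$, and since $Y^{t,x}_t$ is $\mathcal{F}_t$-measurable yet depends only on the driving noise after time $t$, it is deterministic. Existence, uniqueness, and the a priori $L^2$-estimates for this system follow directly from the Lipschitz hypothesis (H1), equivalently the bounds in (H2); these estimates, combined with the flow estimates for $X^{t,x}$ and the polynomial growth of $f$ from (H3), give that $u$ is continuous on $[0,T]\times R^n$ with polynomial growth, which is the regularity one needs to even speak of a viscosity solution.

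The second ingredient is the dynamic programming (flow) property. Using the Markov property of the forward diffusion and the uniqueness of the BSDE solution, one shows $Y^{t,x}_s=u(s,X^{t,x}_s)$ for all $s\in[t,T]$, that is $u(t,x)=\mathcal{E}^g_{t,t+\delta}\big[u(t+\delta,X^{t,x}_{t+\delta})\big]$ for every small $\delta>0$. This local semigroup identity is what lets the proof pass from the global representation to a pointwise PDE statement.

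With these in hand, I would verify the viscosity sub- and supersolution properties by a test-function argument combined with the comparison theorem for BSDEs. For the subsolution part, fix $(t_0,x_0)$ and a smooth $\varphi$ touching $u$ from above there, and argue by contradiction: if $\partial_t\varphi+\mathcal{L}\varphi+g(\varphi,\varphi_x\sigma)<0$ at $(t_0,x_0)$, then by continuity this strict inequality persists on a small parabolic neighborhood. Applying It\^o's formula to $\varphi(s,X^{t_0,x_0}_s)$ on $[t_0,t_0+\delta]$ exhibits $\varphi(s,X_s)$ as the solution of a BSDE whose driver is $-(\partial_t\varphi+\mathcal{L}\varphi)(s,X_s)$, which on the neighborhood strictly dominates $g(\varphi(s,X_s),\varphi_x\sigma(s,X_s))$. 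Comparing this BSDE with the one defining $\mathcal{E}^g_{t_0,t_0+\delta}[\varphi(t_0+\delta,X_{t_0+\delta})]$ through the strict comparison theorem, and using $\varphi\ge u$ together with the flow property, yields $\varphi(t_0,x_0)>u(t_0,x_0)$, contradicting $\varphi(t_0,x_0)=u(t_0,x_0)$. The supersolution inequality is symmetric, using a test function touching from below.

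The main obstacle I expect is this last comparison step. One must set up the two BSDEs on the same small interval with the same terminal value, control the driver of the test-function BSDE uniformly on the shrinking neighborhood, and invoke a \emph{strict} comparison theorem to convert the strict PDE inequality into a strict ordering of the $Y_{t_0}$ values. Keeping the sign conventions straight for the terminal-value (backward-in-time) parabolic problem, and choosing the neighborhood and $\delta$ compatibly so that the strict driver inequality holds along the whole diffusion path up to a set of controlled probability, is where the real care is needed; the regularity and flow properties above are comparatively standard consequences of the Lipschitz assumptions.
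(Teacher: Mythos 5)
The paper does not prove this lemma at all: it is quoted as a known result of Peng \cite{Peng3} (the nonlinear Feynman--Kac formula), so there is no in-paper argument to compare against. What you have written is, in outline, exactly the standard proof of that cited result --- the decoupled forward--backward system giving $u(t,x)=Y^{t,x}_t$, the a priori estimates yielding continuity and polynomial growth, the flow identity $u(t,x)=\mathcal{E}^g_{t,t+\delta}[u(t+\delta,X^{t,x}_{t+\delta})]$, and the test-function argument closed by the comparison theorem for BSDEs. Your sign bookkeeping is right: the driver of the It\^o-induced BSDE for $\varphi(s,X_s)$ is $-(\partial_t\varphi+\mathcal{L}\varphi)$, and the assumed strict inequality makes it dominate $g(\varphi,\varphi_x\sigma)$, so comparison plus $\varphi\ge u$ plus the flow property gives the contradiction.

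The one point you flag but do not resolve is genuine: the strict driver inequality holds only on a parabolic neighborhood of $(t_0,x_0)$, while the comparison theorem needs it along the whole path on $[t_0,t_0+\delta]$. The standard repair is to replace $t_0+\delta$ by $\delta\wedge\tau$, where $\tau$ is the exit time of $X^{t_0,x_0}$ from the spatial neighborhood, and run both BSDEs up to that stopping time; the flow property still applies and the contradiction survives. An alternative (closer to Peng's original write-up) avoids contradiction altogether: one estimates $\frac{1}{\delta}\bigl(\mathcal{E}^g_{t_0,t_0+\delta}[\varphi(t_0+\delta,X_{t_0+\delta})]-\varphi(t_0,x_0)\bigr)$ directly via It\^o and the BSDE stability estimates and lets $\delta\downarrow 0$, which gives the viscosity inequalities without any strict comparison theorem. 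Either way your skeleton is sound; only that localization needs to be written out before the argument is complete.
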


\begin{definition}
Let $u(t,x) \in C(R \times R^n)$. $u(t,x)$ is said to be a viscosity
super-solution (resp. sub-solution) of the following
PDE(\ref{PDEa}):
\begin{align}\label{PDEa}
\frac{\partial u}{\partial
t}+\mathcal{L}u(t,x)+g(u(t,x),u_x(t,x)\sigma(x))=0,
\end{align}
if for any $(t,x) \in R \times R^n$ and $\varphi \in C^{1,2}(R
\times R^n)$ such that $\varphi(t,x)=u(t,x)$ and $(t,x)$ is a
maximum (resp. minimum)  point of $\varphi-u$,
\begin{align}
\frac{\partial \varphi}{\partial
t}(t,x)+\mathcal{L}\varphi(t,x)+g(\varphi(t,x),
\varphi_x(t,x)\sigma(x))\leq 0.\notag
\end{align}
\begin{align}
(\mbox{resp.}\quad\frac{\partial \varphi}{\partial
t}(t,x)+\mathcal{L}\varphi(t,x)+g(\varphi(t,x),
\varphi_x(t,x)\sigma(x))\geq 0.)\notag
\end{align}
$u(t,x)$ is said to be a viscosity solution of PDE(\ref{PDEa}) if it
is both a viscosity super- and sub-solution of (\ref{PDEa}).
\end{definition}

We also consider the viscosity solution of the following type of
quasilinear elliptic PDE(\ref{PDEb}):
\begin{align}\label{PDEb}
\mathcal{L}u(x)+g(u(x),u_x(x)\sigma(x))=0.
\end{align}

We can directly get an relation between the two solutions of
(\ref{PDEa}) and (\ref{PDEb}):

\begin{lemma}\label{pdea and pdeb}
Let $\tilde{u}(t,x)=u(x)$ for all $(t,x) \in R \times R^n$, then we have:\\
$\tilde{u}(t,x)$ is the viscosity super-(sub-)solution of PDE
(\ref{PDEa}) $\Leftrightarrow$ $u(x)$ is the viscosity
super-(sub-)solution of PDE (\ref{PDEb}).
\end{lemma}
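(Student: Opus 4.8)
The plan is to prove both directions of the equivalence directly from the definition of viscosity super-/sub-solution. Since $\tilde{u}(t,x)=u(x)$ does not depend on $t$, the time-derivative term $\frac{\partial\varphi}{\partial t}$ is the only place where the extra variable enters, and the whole argument reduces to showing that this term can be made to vanish (or be neglected) when we pass between test functions in $C^{1,2}(R\times R^n)$ and test functions in $C^2(R^n)$. I would treat the super-solution case in full and remark that the sub-solution case is identical with inequalities reversed.

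For the forward direction ($\Rightarrow$), suppose $\tilde{u}$ is a viscosity super-solution of (\ref{PDEa}). Given $x_0\in R^n$ and a test function $\psi\in C^2(R^n)$ with $\psi(x_0)=u(x_0)$ and $x_0$ a maximum point of $\psi-u$, I would lift $\psi$ to a test function on $R\times R^n$ by setting $\varphi(t,x):=\psi(x)$. Then $\varphi\in C^{1,2}(R\times R^n)$, $\varphi(t_0,x_0)=\tilde u(t_0,x_0)$ for any fixed $t_0$, and $(t_0,x_0)$ is a maximum point of $\varphi-\tilde u$ (since the difference is $\psi(x)-u(x)$, independent of $t$). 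The super-solution inequality for (\ref{PDEa}) then reads
\begin{align}
\frac{\partial\varphi}{\partial t}(t_0,x_0)+\mathcal{L}\varphi(t_0,x_0)+g(\varphi(t_0,x_0),\varphi_x(t_0,x_0)\sigma(x_0))\leq 0.\notag
\end{align}
Because $\varphi$ is constant in $t$, the first term vanishes and $\mathcal{L}\varphi=\mathcal{L}\psi$, $\varphi_x=\psi_x$, so this is exactly the super-solution inequality for (\ref{PDEb}) at $x_0$. Hence $u$ is a viscosity super-solution of (\ref{PDEb}).

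For the reverse direction ($\Leftarrow$), suppose $u$ is a viscosity super-solution of (\ref{PDEb}), and let $\varphi\in C^{1,2}(R\times R^n)$ touch $\tilde u$ from above at $(t_0,x_0)$, i.e. $\varphi(t_0,x_0)=\tilde u(t_0,x_0)$ and $(t_0,x_0)$ is a maximum of $\varphi-\tilde u$. Here the main point — and the only place requiring care — is that $\varphi$ genuinely depends on $t$, so I cannot simply discard $\frac{\partial\varphi}{\partial t}(t_0,x_0)$. The standard device is to freeze the time variable: define $\psi(x):=\varphi(t_0,x)$, which lies in $C^2(R^n)$. Since $\tilde u(t_0,x)=u(x)$, the function $\psi(x)-u(x)=\varphi(t_0,x)-\tilde u(t_0,x)$ attains a maximum at $x_0$ with value $0$, so $\psi$ is an admissible test function for $u$ at $x_0$; the super-solution property of $u$ gives $\mathcal{L}\psi(x_0)+g(u(x_0),\psi_x(x_0)\sigma(x_0))\leq 0$, and $\mathcal{L}\psi(x_0)=\mathcal{L}\varphi(t_0,x_0)$, $\psi_x(x_0)=\varphi_x(t_0,x_0)$. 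It remains to control the time derivative: because $(t_0,x_0)$ is an interior maximum of $\varphi-\tilde u$ in the $t$-direction and $\tilde u$ is $t$-independent, $t_0$ is an interior maximum of $t\mapsto\varphi(t,x_0)$, whence $\frac{\partial\varphi}{\partial t}(t_0,x_0)=0$. Adding this vanishing term back recovers precisely the super-solution inequality for (\ref{PDEa}), completing the proof. The subtlety I expect to matter most is ensuring $(t_0,x_0)$ is an \emph{interior} point so that the first-order condition $\partial_t\varphi=0$ holds; this is exactly why the lemma is stated with $(t,x)$ ranging over the open set $R\times R^n$.
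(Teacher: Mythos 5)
Your proof is correct and follows essentially the same route as the paper's: lifting a spatial test function to a $t$-independent one for the forward direction, and freezing time plus the first-order condition $\frac{\partial\varphi}{\partial t}(t_0,x_0)=0$ at the interior maximum for the reverse direction (you in fact justify this vanishing more carefully than the paper, which simply asserts it). One minor verbal slip: in the reverse direction you say $\varphi$ touches $\tilde u$ ``from above,'' but the condition you actually use --- $(t_0,x_0)$ a maximum of $\varphi-\tilde u$ with $\varphi(t_0,x_0)=\tilde u(t_0,x_0)$ --- means $\varphi\leq\tilde u$, i.e.\ touching from below, consistent with the paper's definition of super-solution; the mathematics is unaffected.
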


\begin{proof}We only prove the case of viscosity super-solution. The "sub-" case is an
immediate conclusion of the "super-" case.

(i). $"\Rightarrow"$:

For any $(t_0,x_0)\in R\times R^n$, and a function $\varphi(x)\in
C^2(R^n)$ which satisfies $\varphi(x)\leq u(x),\varphi(x_0)=u(x_0)$,
we define $\tilde{\varphi}(t,x)=\varphi(x)$ for all $(t,x) \in R
\times R^n$. Then
$$\frac{\partial\tilde{\varphi}}{\partial t}=0,\ \ \  \tilde{\varphi}(t_0,x_0)=\tilde{u}(t_0,x_0),
\ \ \ \tilde{\varphi}(t,x)\leq\tilde{u}(t,x),$$ and due to the
assumption that $\tilde{u}(t,x)$ is the viscosity super-solution of
PDE(\ref{PDEa}), we have
\begin{align}
\frac{\partial \tilde{\varphi}}{\partial
t}(t_0,x_0)+\mathcal{L}\tilde{\varphi}(t_0,x_0)+g(\tilde{\varphi}(t_0,x_0),
\tilde{\varphi}_x(t_0,x_0)\sigma(x_0))\leq 0,\notag
\end{align}
i.e.
$$\mathcal{L}\varphi(x_0)+g(\varphi(x_0),
\varphi_x(x_0)\sigma(x_0))\leq 0.$$ So $u(x)$ is the viscosity
super-solution of PDE (\ref{PDEb}).

(ii). $"\Leftarrow"$:

For any $(t_0,x_0)\in R\times R^n$, and a function $\varphi(t,x)\in
C^2(R\times R^n)$ which satisfies $\varphi(t,x)\leq
\tilde{u}(t,x),\varphi(t_0,x_0)=\tilde{u}(t_0,x_0)$, then
\begin{align}\label{condition1}
\frac{\partial\varphi}{\partial t}(t_0,x_0)=0,
\end{align}
 and due to the assumption that $u(x)$ is the viscosity super-solution of
PDE(\ref{PDEb}), we have
$$\mathcal{L}\varphi(t_0,x_0)+g(\varphi(t_0,x_0),
\varphi_x(t_0,x_0)\sigma(x_0))\leq 0.$$
Combined with
(\ref{condition1}), we get
$$\frac{\partial\varphi}{\partial t}(t_0,x_0)+\mathcal{L}\varphi(t_0,x_0)+g(\varphi(t_0,x_0),
\varphi_x(t_0,x_0)\sigma(x_0))\leq 0.$$
 So $\tilde{u}(t,x)$ is the viscosity super-solution of PDE (\ref{PDEa}).
\end{proof}
Then we can introduce our main result of this section:
\begin{theorem}\label{theorem2}
We have the following two consequences:\\
(i). For any $f(x) \in C(R^n)$, and $g(y,z)$ satisfying (H1), if
$\forall x \in R^n$, $f(X^x_t)$ is a g-supermartingale, then $f(x)$
is
a viscosity super-solution of PDE (\ref{PDEb}).\\
(ii). For any $f(x)$ satisfying (H3), and $g(y,z)$ satisfying (H1)
(H2), let $f(x)$ is a viscosity super-solution of PDE (\ref{PDEb}),
then $\{f(X^x_t)\}$ is a g-supermartingale for all $x \in R^n$.
\end{theorem}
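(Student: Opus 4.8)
The plan is to treat the two implications separately, using for (i) a contradiction argument powered by the strict comparison theorem for BSDEs, and for (ii) the nonlinear Feynman-Kac formula (Lemma \ref{non Feynman Kac}) together with a comparison principle for viscosity solutions.

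For (i), I would argue by contradiction. Suppose $f$ fails to be a viscosity super-solution of (\ref{PDEb}) at some $x_0$: there is $\varphi \in C^2(R^n)$ with $\varphi \le f$, $\varphi(x_0)=f(x_0)$, yet $\mathcal{L}\varphi(x_0)+g(\varphi(x_0),\varphi_x(x_0)\sigma(x_0))>0$. By continuity of the left-hand side there are $\delta>0$ and $c>0$ with $\mathcal{L}\varphi(x)+g(\varphi(x),\varphi_x(x)\sigma(x))\ge c$ on the ball $B(x_0,\delta)$; we may take $\varphi\in C_0^2$ after modifying it away from the ball. Let $\tau$ be the exit time of $X^{x_0}$ from $B(x_0,\delta)$. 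Applying the It\^{o} formula to $\varphi(X^{x_0}_{t\wedge\tau})$ and inserting the term $g(\varphi,\varphi_x\sigma)$ exactly as in the proof of Theorem 1, I would exhibit $(\varphi(X^{x_0}_{\cdot\wedge\tau}),\varphi_x\sigma(X^{x_0}_{\cdot\wedge\tau}))$ as the solution of a BSDE on $[0,t\wedge\tau]$ with terminal value $\varphi(X^{x_0}_{t\wedge\tau})$ and driver $g-\mathcal{A}^X_g\varphi$, which is strictly dominated by $g$ (the gap being at least $c$ for $r<\tau$). The strict comparison theorem for BSDEs (\cite{Peng2}) then yields $\varphi(x_0)<\mathcal{E}^g_{0,t\wedge\tau}[\varphi(X^{x_0}_{t\wedge\tau})]$ for $t>0$. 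On the other hand, $\varphi\le f$ and the monotonicity of g-expectation give $\mathcal{E}^g_{0,t\wedge\tau}[\varphi(X^{x_0}_{t\wedge\tau})]\le\mathcal{E}^g_{0,t\wedge\tau}[f(X^{x_0}_{t\wedge\tau})]$, while optional sampling for the g-supermartingale $f(X^{x_0}_\cdot)$ gives $\mathcal{E}^g_{0,t\wedge\tau}[f(X^{x_0}_{t\wedge\tau})]\le f(x_0)=\varphi(x_0)$. Chaining these inequalities produces $\varphi(x_0)<\varphi(x_0)$, the desired contradiction.

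For (ii), I would first reduce the g-supermartingale property to a pointwise comparison. By the Markov property of $\{X^x_t\}$ together with the time-homogeneity and flow property of g-expectation, $\mathcal{E}^g_{s,t}[f(X^x_t)]=w(t-s,X^x_s)$ where $w(r,y):=\mathcal{E}^g_{0,r}[f(X^y_r)]$, so that $\{f(X^x_t)\}$ being a g-supermartingale is equivalent to $w(r,y)\le f(y)$ for all $r\ge0$ and $y\in R^n$. Fixing a terminal time $T=r$, Lemma \ref{non Feynman Kac} identifies $w(r,\cdot)=u(0,\cdot)$, where $u$ is the viscosity solution of the terminal-value PDE (\ref{PDEa}) on $(0,r)\times R^n$ with $u(r,\cdot)=f$. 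Meanwhile, since $f$ is a viscosity super-solution of the elliptic PDE (\ref{PDEb}), Lemma \ref{pdea and pdeb} shows that $\tilde{f}(t,x):=f(x)$ is a viscosity super-solution of (\ref{PDEa}) with the same terminal data $\tilde f(r,\cdot)=f$. A comparison principle for viscosity solutions of (\ref{PDEa}) then gives $u\le\tilde f$ on $(0,r)$, hence $w(r,y)=u(0,y)\le\tilde f(0,y)=f(y)$, which is exactly what we need.

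I expect the main obstacle to lie in part (ii): invoking the comparison principle for the quasilinear parabolic PDE (\ref{PDEa}) rigorously. This is where conditions (H2) (boundedness of $F$ and its derivatives) and (H3) (polynomial growth of $f$) enter, ruling out multiple solutions and controlling behaviour at infinity; the standard machinery of doubling the variables (cf. \cite{user guide}) applies, but care is needed because the comparison is against a possibly unbounded stationary super-solution. In part (i) the delicate points are the passage from the weak comparison theorem to a \emph{strict} inequality and the use of optional sampling at the stopping time $\tau$; both are available in the BSDE and g-martingale literature but must be cited precisely.
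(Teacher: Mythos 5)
Your part (ii) is essentially the paper's own argument: the same reduction via the Markov property of $\mathcal{E}^g_{s,t}$ to the inequality $\mathcal{E}^g_{0,T}[f(X^x_T)]\leq f(x)$, the same identification of $\mathcal{E}^g_{0,T-t}[f(X^x_{T-t})]$ as the viscosity solution of the terminal-value problem via Lemma \ref{non Feynman Kac}, the same use of Lemma \ref{pdea and pdeb} to view $\tilde f(t,x)=f(x)$ as a super-solution of the parabolic equation, and the same appeal to the comparison (maximum) principle of \cite{Barles}. Your part (i), however, takes a genuinely different route. The paper argues directly and without stopping times: for a test function $\varphi\leq f$ touching at $x$, monotonicity of the $g$-expectation and the supermartingale property at \emph{deterministic} times give $\mathcal{E}^g_{0,t}[\varphi(X^x_t)]\leq\mathcal{E}^g_{0,t}[f(X^x_t)]\leq f(x)=\varphi(x)$, so the difference quotient defining $\mathcal{A}^X_g\varphi(x)$ is nonpositive, and formula (\ref{smooth A}) converts this into the viscosity inequality. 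Your contradiction argument via the strict comparison theorem for BSDEs localized at the exit time $\tau$ of a small ball also works, and it has the merit of not relying on the identity $\mathcal{A}^X_g\varphi=\mathcal{L}\varphi+g(\varphi,\varphi_x\sigma)$ for general $C^2$ test functions (which the paper only establishes, somewhat informally, for $C_0^2$ via the smooth Feynman--Kac formula). The price is that you invoke two heavier ingredients the paper avoids: a \emph{strict} comparison theorem for BSDEs, and above all an optional sampling theorem for $g$-supermartingales at the bounded stopping time $t\wedge\tau$ --- the hypothesis only gives the supermartingale inequality at deterministic times, so the step $\mathcal{E}^g_{0,t\wedge\tau}[f(X^{x_0}_{t\wedge\tau})]\leq f(x_0)$ requires the nonlinear Doob--Meyer decomposition of \cite{Peng2} and an optional stopping result for continuous $g$-supermartingales. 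You flag this yourself, and the result is available, but it must be cited explicitly; with that citation supplied your proof is complete, just less economical than the paper's on part (i). (A minor point: the modification of $\varphi$ to lie in $C_0^2$ is unnecessary, since the It\^{o} computation and the inequality $\varphi\leq f$ are only used on the closed ball where the process lives up to time $\tau$.)
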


Actually, the consequence (ii) is the answer of our main problem and
the consequence (i) is the converse of it. But (i) is easier to be
proved, so we are going to prove (i) at first:
\begin{proof}(i).

For any $x \in R^n$, let $\varphi \in C^2(R^n)$, $\varphi(x) = f(x)$
where x is a maximum point of $\varphi-f$. It means $\forall
\tilde{x} \in R^n$, we have $\varphi(\tilde{x}) \leq f(\tilde{x})$.
Then from (\ref{smooth A}), we get
\begin{align}
\mathcal{L}\varphi(x)+g(\varphi(x), \varphi_x(x)\sigma(x)) &=
\mathcal{A}^X_g\varphi(x)\notag\\
&=\lim_{t\downarrow
0}\frac{\mathcal{E}^g_t[\varphi(X^x_t)]-\varphi(x)}{t}\notag\\
&=\lim_{t\downarrow
0}\frac{\mathcal{E}^g_t[\varphi(X^x_t)]-f(x)}{t}\notag.
\end{align}
According to the comparison theory of BSDE, we get
$$\mathcal{E}^g_t[\varphi(X^x_t)]
\leq \mathcal{E}^g_t[f(X^x_t)],$$
 and with the assumption $\{f(X^x_t)\}$ is a
g-supermartingale, we can get:
 $$\mathcal{E}^g_t[\varphi(X^x_t)]-f(x)
\leq \mathcal{E}^g_t[f(X^x_t)]-f(x)\leq0.$$
Then
\begin{align}
\mathcal{A}^X_g\varphi(x)=\lim_{t\downarrow
0}\frac{\mathcal{E}^g_t[\varphi(X^x_t)]-f(x)}{t} \leq 0.\notag
\end{align}
i.e.
$$\mathcal{L}\varphi(x)+g(\varphi(x), \varphi_x(x)\sigma(x))\leq 0.$$
By definition, it means $f(x)$ is a viscosity super-solution of
PDE (\ref{PDEb}).\\
(ii).

We want to prove $\{f(X^x_t)\}$ is a g-supermartingale for any $x\in
R^n$. It means that we need to prove $\forall x\in R^n\  and\
\forall 0\leq s\leq t$, we have
$$\mathcal{E}^g_{s,t}[f(X^x_t)]\leq f(X^x_s).$$
Under the assumption, in fact $b(x),\sigma(x)$ and $g(y,z)$ are all
independent of time t, so we can get the Markovian property of
$\mathcal{E}^g_{s,t}$, i.e.
$$\mathcal{E}^g_{s,t}[f(X^x_t)]=\mathcal{E}^g_{t-s}[f(X^y_{t-s})]|_{y=X^x_s}.$$
Then we get an equivalence relation:
\begin{align}\label{g-martingale equivalence}
&\{f(X^x_t)\} \mbox{is a g-(super)martingale for any}\  x \in R^n\Leftrightarrow\notag\\
& \mathcal{E}^g_{t}[f(X^x_{t})] =(\leq) f(x)\  \mbox{for any}\ t\geq
0\  \mbox{and}\  x \in R^n.
\end{align}
So we suffice to prove the latter assertion.

For any $T\geq 0$, the assumption $f(x)$ is a viscosity
super-solution of PDE(\ref{PDEb}) implies that
$\tilde{f}(t,x):=f(x)$ is a viscosity super-solution to the
following PDE:
\begin{equation}\label{PDE***}
\begin{cases}
&\frac{\partial u}{\partial
t}+\mathcal{L}u(t,x)+g(u(t,x),u_x(t,x)\sigma (x))=0,\\
&u(T,x)=f(x),
\end{cases}
\end{equation}
according to lemma \ref{pdea and pdeb}. And with the help of lemma
\ref{non Feynman Kac},
$$u(t,x)=\mathcal{E}^g_{0,T-t}[f(X^{x}_{T-t})]$$
is actually the viscosity solution of PDE (\ref{PDE***}). Moreover
by the maximum principle of the viscosity solution( see
\cite{Barles}), we can get:
$$u(t,x) \leq \tilde{f}(t,x)\quad \mbox{for any}\ 0\leq t\leq T.$$
Especially, we have
$$u(0,x)\leq \tilde{f}(0,x),$$
i.e.
$$\mathcal{E}^g_T[f(X^x_T)]\leq f(x).$$
\end{proof}
\begin{corollary}
(i). For any $f(x) \in C(R^n)$, and $g(y,z)$ satisfying (H1), if
$\forall x \in R^n$, $f(X^x_t)$ is a g-martingale, then $f(x)$ is
a viscosity solution of PDE(\ref{PDEb}).\\
(ii). For any $f(x)$ satisfying (H3), and $g(y,z)$ satisfying (H1)
(H2), let $f(x)$ is a viscosity solution of PDE(\ref{PDEb}), then
$\{f(X^x_t)\}$ is a g-martingale for all $x \in R^n$.
\end{corollary}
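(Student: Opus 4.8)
The plan is to deduce the corollary directly from Theorem \ref{theorem2} by observing that the martingale and the viscosity-solution notions each split into a ``super'' half and a ``sub'' half. Concretely, $\{f(X^x_t)\}$ is a g-martingale precisely when it is simultaneously a g-supermartingale and a g-submartingale, while $f$ is a viscosity solution of (\ref{PDEb}) precisely when it is both a viscosity super-solution and a viscosity sub-solution. Thus each implication in the corollary should follow by applying Theorem \ref{theorem2} together with its ``sub'' analogue and intersecting the two conclusions.

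For part (i), I would first apply Theorem \ref{theorem2}(i) to the supermartingale property of $\{f(X^x_t)\}$ to conclude that $f$ is a viscosity super-solution of (\ref{PDEb}); the submartingale property then yields, via the ``sub'' analogue of Theorem \ref{theorem2}(i), that $f$ is a viscosity sub-solution, and combining these gives a viscosity solution. For part (ii), starting from the viscosity-solution hypothesis I would read off both the super-solution and the sub-solution properties, apply Theorem \ref{theorem2}(ii) to obtain the g-supermartingale property and its ``sub'' analogue to obtain the g-submartingale property, and conclude that $\{f(X^x_t)\}$ is a g-martingale for all $x \in R^n$.

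The one point requiring care --- and the main obstacle --- is that Theorem \ref{theorem2} is stated and proved only in the ``super'' direction, so I must supply its ``sub'' counterpart. Rather than rewrite the proofs with all inequalities reversed, I would exploit the reflection symmetry of the BSDE (\ref{BSDE}): setting $\tilde{g}(y,z):=-g(-y,-z)$, the process $-f(X^x_t)$ is a $\tilde{g}$-supermartingale if and only if $f(X^x_t)$ is a g-submartingale, and, since $\mathcal{L}(-f)+\tilde{g}(-f,-f_x\sigma)=-(\mathcal{L}f+g(f,f_x\sigma))$, the function $-f$ is a viscosity super-solution of the $\tilde{g}$-version of (\ref{PDEb}) if and only if $f$ is a viscosity sub-solution of (\ref{PDEb}). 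I would then check that $\tilde{g}$ again satisfies (H1), and (H2) with the same constant (which is needed so that Lemmas \ref{non Feynman Kac} and \ref{pdea and pdeb} still apply in part (ii)), so that Theorem \ref{theorem2} holds verbatim for the pair $\tilde{g}$ and $-f$. Translating the resulting ``super'' statements back through this symmetry yields exactly the required ``sub'' statements, and with this bookkeeping in place the corollary is immediate.
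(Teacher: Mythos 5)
Your proposal is correct and follows essentially the same route as the paper, which simply declares the corollary an immediate consequence of Theorem \ref{theorem2} by combining the ``super'' and ``sub'' halves of both notions. Your explicit construction of the ``sub'' counterpart via the reflection $\tilde{g}(y,z)=-g(-y,-z)$ and $f\mapsto -f$ (checking that (H1)--(H2) are preserved) carefully fills in a step the paper leaves unstated, and it is valid.
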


It is an immediate consequence from the theorem \ref{theorem2}.

%%%%%%%%%%%%%%%%%%%%%%%%%%%%%%%%%%%%%%%%%%%%%%%%%%%%%%%%%%%%%%%%%%%%%%%%%%%%%%%%
\section{Strict Converse of Mean Value Property}

For classical harmonic function, many generalized results of the
converse problem of mean value property have been investigated (cf.
\cite{Kellogg}\cite{oksendal}). In \cite{oksendal}, {\O}ksendal and
Stroock give a technique to solve a strict converse of the mean
value property for harmonic functions. Now we will generalize it to
the case of g-harmonic function. Here the strictness means that for
each $x\in R^n$ we don't need justify that for any stopping time
$\tau$ whether $\mathcal{E}_{0,\tau}^g(f(X^x_{\tau}))$ equals
$f(x)$. We only need to justify one appropriate stopping time of
each x.

In the sequel we put $\Delta(x,r)=\{y\in R^n; |y-x|<r\}$ for any
$x\in R^n$ and $r>0$. Let $\tau_{U}=\mbox{inf}\{t>0; X^x_t \in
U^c\}$ for any open set U. And we suppose the operator
(\ref{mathcalL}) is elliptic on $R^n$.
\begin{theorem}\label{theorem3}
$f(x)$ is a local bounded continuous function on $R^n$. If for any
$x\in R^n$, there exists a radius $r(x)$, the mean value property
holds:
\begin{align}\label{rx}
\mathcal{E}_{0,\tau_x}^g[f(X^x_{\tau_x})]=f(x)\ \ \ \ \ \mbox{here}\
\ \  \tau_x=\tau_{\Delta(x,r(x))}.
\end{align}
And $r(x)$ is a measurable function of x and satisfies that for each
x, there exists a bounded open set $U_x$, $x\in U_x$ and moreover
$r(y),\ y\in U_x$ should satisfy the following two conditions:
\begin{align}\label{rycondition1}
0\leq r(y)\leq \mbox{dist}(y,\partial U_x),
\end{align}
and
\begin{align}\label{rycondition2}
\mbox{inf}\{r(y); y\in K\} > 0
\end{align}
for all closed subsets K of $U_x$ with $\mbox{dist}(K,\partial U_x)
> 0$. Then we can get:

(i). For each $y\in U_x$ the mean value property holds on the
boundary:
\begin{align}
\mathcal{E}^g_{0,\tau_y}[f(X^y_{\tau_y})]=f(y),\ \ \ \mbox{here}\ \
\tau_y =\mbox{inf}\{t>0; X^y_t \in U_x^c\}. \notag
\end{align}
and furthormore:

(ii). $f(x)$ is the viscosity solution of PDE(12).
\end{theorem}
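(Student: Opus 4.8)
The plan is to prove (i) by an exhaustion argument that grows the single small ball guaranteed at each point into the whole set $U_x$, and then to deduce (ii) from (i) by a localization followed by a differentiation of the mean value identity. For (i), fix $y \in U_x$ and build an increasing sequence of stopping times adapted to the diffusion: set $\tau_0 = 0$ and
$$\tau_{k+1} = \inf\{t > \tau_k : |X^y_t - X^y_{\tau_k}| \geq r(X^y_{\tau_k})\} \wedge \tau_y,$$
so that each increment is, up to the exit from $U_x$, the exit time of $\{X^y_t\}$ from the ball $\Delta(X^y_{\tau_k}, r(X^y_{\tau_k}))$. Condition (\ref{rycondition1}) guarantees this ball lies in $U_x$, so the construction stays inside $U_x$ until $\tau_y$. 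Using the strong Markov property of the diffusion together with the Markovian and tower (consistency) properties of the conditional g-expectation -- the stopping-time analogues of the identity $\mathcal{E}^g_{s,t}[f(X^x_t)] = \mathcal{E}^g_{t-s}[f(X^y_{t-s})]|_{y = X^x_s}$ used in Theorem \ref{theorem2} -- the local hypothesis (\ref{rx}) applied at the random point $X^y_{\tau_k}$ yields $\mathcal{E}^g_{\tau_k,\tau_{k+1}}[f(X^y_{\tau_{k+1}})] = f(X^y_{\tau_k})$. Feeding this into the tower property and inducting on $k$ gives $\mathcal{E}^g_{0,\tau_k}[f(X^y_{\tau_k})] = f(y)$ for every $k$.

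It then remains to pass to the limit $k \to \infty$. I would first show $\tau_k \uparrow \tau_y$ almost surely: if instead $\tau_k \uparrow \tau_\infty < \tau_y$, then by path continuity $X^y_{\tau_k}$ is Cauchy and its limit lies at a positive distance from $\partial U_x$, so the tail of the path sits in a closed set $K \subset U_x$ with $\mathrm{dist}(K,\partial U_x) > 0$; condition (\ref{rycondition2}) then forces $r(X^y_{\tau_k}) \geq \delta > 0$ and hence $|X^y_{\tau_{k+1}} - X^y_{\tau_k}| \geq \delta$ for all large $k$, contradicting convergence. Since $U_x$ is bounded and $\mathcal{L}$ is elliptic, $\tau_y$ has finite expectation, so the BSDE is well posed on this random horizon; because $f$ is continuous and bounded on the compact set $\overline{U_x}$, we have $f(X^y_{\tau_k}) \to f(X^y_{\tau_y})$ boundedly, and the stability of BSDE solutions under convergence of the terminal time and terminal datum gives $\mathcal{E}^g_{0,\tau_y}[f(X^y_{\tau_y})] = \lim_k \mathcal{E}^g_{0,\tau_k}[f(X^y_{\tau_k})] = f(y)$, which is (i).

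For (ii), I would first upgrade (i) to a mean value property on \emph{every} small ball inside $U_x$. Given $y_0 \in U_x$ and a ball $B = \Delta(y_0,\rho)$ with $\overline{B} \subset U_x$, decompose the exit from $U_x$ at the intermediate time $\tau_B \leq \tau_y$ via the tower property, then apply the strong Markov property together with (i) at the point $X^{y_0}_{\tau_B} \in U_x$ to identify the continuation value as $f(X^{y_0}_{\tau_B})$; this yields $f(y_0) = \mathcal{E}^g_{0,\tau_B}[f(X^{y_0}_{\tau_B})]$ for all such balls. Now fix $x_0$ and a test function $\varphi \in C^2$ touching $f$ from below at $x_0$ (so $\varphi \leq f$ and $\varphi(x_0) = f(x_0)$). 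Monotonicity of the g-expectation gives $\mathcal{E}^g_{0,\tau_B}[\varphi(X^{x_0}_{\tau_B})] \leq f(x_0) = \varphi(x_0)$, and dividing by the positive quantity $E[\tau_B]$ and letting $\rho \downarrow 0$ produces $\mathcal{L}\varphi(x_0) + g(\varphi(x_0),\varphi_x(x_0)\sigma(x_0)) \leq 0$ through the exit-time form of the generator identity (\ref{smooth A}); the analogous computation with a test function touching $f$ from above yields the reverse inequality, so $f$ is a viscosity solution of (\ref{PDEb}).

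The main obstacle is twofold and lives in the two limits. First, the stopping-time versions of the Markov and tower properties for conditional g-expectations, and the stability of the BSDE as the random terminal time $\tau_k$ increases to $\tau_y$, must be justified carefully; the $L^2$ a priori estimates on a random horizon of finite expectation are the crux of the limit in (i). Second, replacing the deterministic time $t$ in (\ref{smooth A}) by the exit time $\tau_B$ and showing that $\big(\mathcal{E}^g_{0,\tau_B}[\varphi(X^{x_0}_{\tau_B})] - \varphi(x_0)\big)/E[\tau_B] \to \mathcal{L}\varphi(x_0) + g(\varphi(x_0),\varphi_x(x_0)\sigma(x_0))$ as $\rho \downarrow 0$ is a Dynkin-type formula under the nonlinear expectation; establishing it, with the uniformity needed as the ball shrinks, is where the real work of (ii) lies.
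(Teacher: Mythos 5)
Your proof of part (i) is essentially the paper's own argument: the same iterated exit times $\tau_k$, the same induction through the strong Markov property and the consistency of the conditional g-expectation giving $\mathcal{E}^g_{0,\tau_k}[f(X^y_{\tau_k})]=f(y)$, and the same proof that $\tau_k\uparrow\tau_y$ a.s.\ (conditions (\ref{rycondition1})--(\ref{rycondition2}) force the increments $|X^y_{\tau_{k+1}}-X^y_{\tau_k}|$ to be bounded below whenever the path stalls at positive distance from $\partial U_x$, contradicting path continuity, while ellipticity of (\ref{mathcalL}) and boundedness of $U_x$ give $P(\tau_y<\infty)=1$); your Cauchy-sequence phrasing of the contradiction is if anything cleaner than the paper's. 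Where you genuinely diverge is part (ii). The paper disposes of it in one sentence by appealing to the nonlinear Feynman--Kac formula for elliptic PDEs from \cite{Peng3}: once (i) identifies $f$ on each $U_x$ with the stochastic representation $\mathcal{E}^g_{0,\tau_y}[f(X^y_{\tau_y})]$ of the Dirichlet problem, $f$ is locally the viscosity solution of (\ref{PDEb}) by that cited theory. You instead propose to upgrade (i) to a mean value property on every small ball and then differentiate it against $C^2$ test functions via an exit-time version of (\ref{smooth A}). That route is more self-contained in spirit, but its crux --- the nonlinear Dynkin-type limit $(\mathcal{E}^g_{0,\tau_B}[\varphi(X^{x_0}_{\tau_B})]-\varphi(x_0))/E[\tau_B]\to\mathcal{L}\varphi(x_0)+g(\varphi(x_0),\varphi_x(x_0)\sigma(x_0))$ as the ball shrinks --- is exactly what you leave unproved, and it is not a triviality (it needs It\^{o}'s formula up to $\tau_B$ together with a priori BSDE estimates on a small random horizon). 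The paper's citation gets this for free, so to complete your version you should either import the elliptic Feynman--Kac theorem as the paper does or actually establish the Dynkin estimate. The same honesty applies to the limit $k\to\infty$ in (i): both you and the paper assert $\mathcal{E}^g_{0,\tau_k}\to\mathcal{E}^g_{0,\tau_y}$ without proof, though you at least name the required BSDE stability on a random horizon of finite expectation, which the paper silently omits.
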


\begin{proof}
$(i)\Rightarrow (ii)$ is also based on the nonlinear Feynman-Kac
formula for elliptic PDE(cf.\cite{Peng3}). So we sufficiently prove
the first conclusion.

For each $y\in U_x$, we define a sequence of stopping times $\tau_k$
for $\{X_t^y\}$ by induction as follows:
\begin{align}
\tau_0 &\equiv 0\notag \\
\tau_k &= \mbox{inf} \{t\geq \tau_{k-1};
|X^y_t-X^y_{\tau_{k-1}}|\geq r(X^y_{\tau_{k-1}})\}; \ \ k\geq
1.\notag
\end{align}
By the mean property(\ref{rx}), and the strong markovian property we
can get
\begin{align}
\mathcal{E}^g_{0,\tau_k}[f(X^y_{\tau_k})]&=\mathcal{E}^g_{0,\tau_{k-1}}[\mathcal{E}^g_{\tau_{k-1},\tau_k}
[f(X^y_{\tau_k})]]\notag \\
&=\mathcal{E}^g_{0,\tau_{k-1}}[\mathcal{E}^g_{0,\tau_k-\tau_{k-1}}[f(X^{X^y_{\tau_{k-1}}}_{\tau_k-\tau_{k-1}})]]\notag\\
&=\mathcal{E}^g_{0,\tau_{k-1}}[f(X^y_{\tau_{k-1}})],\notag
\end{align}
then by induction we get
\begin{align}\label{induction}
\mathcal{E}^g_{0,\tau_k}[f(X^y_{\tau_k})]=f(y).\notag
\end{align}
In the following we will prove $\tau_k\rightarrow \tau_y\ a.e.$ when
$k \rightarrow \infty$. Obviously
$$\tau_k \geq \tau_{k-1},$$
so there exists a stopping time $\tau$ s.t. $\tau_k\uparrow \tau$.
If $\tau \neq \tau_y$, then there exists $\epsilon >0$ s.t.
$$\mbox{dist}(X^y_{\tau_k},\partial U_x)\geq \epsilon\ \ \ \mbox{for any}\ k.$$
Let $r_k=r(X^y_{\tau_k})$, according to the
condition(\ref{rycondition2}), we get there exists $r>0$,
$$r_k \geq r\ \ \ \ \  \mbox{for any}\ \  k.$$
It means
$$\mbox{dist}(X^y_{\tau_k},X^y_{\tau_{k-1}})\geq r.$$
And since ${X^y_t}$ is continuous, then $\tau_k\rightarrow\infty$,
which implies $\tau_y=\infty$. So
$$P(\tau_k\  \mbox{don't converge to}\  \tau_y)\leq P(\tau_y=\infty).$$
But for (\ref{mathcalL}) is elliptic and $U_x$ is bounded, we have
$P(\tau_y<\infty)=1$. So
$$P(\tau_k\ \mbox{converge to}\ \tau_y)=1.$$
Then we get
\begin{align}
f(y)&=\mathcal{E}^g_{0,\tau_k}[f(X^y_{\tau_k})]\notag\\
&=\lim_{k\rightarrow
\infty}\mathcal{E}^g_{0,\tau_k}[f(X^y_{\tau_k})]\notag\\
&=\mathcal{E}^g_{0,\tau_y}[f(X^y_{\tau_y})]\notag
\end{align}
So we have finished the proof.
\end{proof}

\textbf{Acknowledgement.} The author is grateful to S. Peng for the
suggestion to use maximal principle of viscosity solution to prove
theorem 2(ii). This work was supported by the National Natural
Science Foundation of China (No. 11026125).

%%%%%%%%%%%%%%%%%%%%%%%%%%%%%%%%%%%%%%%%%%%%%%%%%%%%%%%%%%%%%%%%%%%%%%%%%%%%%%%%


\begin{thebibliography}{9}
\bibitem{Barles}
      G. Barles, E. Lesigne, \emph{SDE, BSDE and PDE}, in Pitman
      Research Notes in Mathematics Series, no. 364, "Backward Stochastic Differential
      Equation", Ed. by N. El Karoui and L.Mazliak, 47-80, 1997.
\bibitem{ChenPeng1}
      Chen Z. and Peng S., \emph{Continuous properties of
      g-martingales}, Chin. Ann. of Math, 22B:1, 115-128, 2001.
\bibitem{user guide}
      M. G. Crandall, H. Ishii and P. L. Lions, \emph{User's guide
      to viscosity solutions of second order Partial differential
      equations}, Bull. Amer. Math. Soc., 27, 1-67, 1992.
\bibitem{Jiang}
      Long Jiang, \emph{Convexity, translation invariance and subadditivity for g-expectations and related risk measures},
The Annals of Applied Probability, 18(1), 245-258, 2008.
\bibitem{Kellogg}
      O. D. Kellogg, \emph{Converses of Gauss' theorem on the arithmetic
      mean}, Tran. Amer. Math. Soc., 36, 227-242, 1934.
\bibitem{oksendal1}
      B. {\O}ksendal, \emph{Stochastic differential Equations, Sixth
      Edition}, Springer, 2003.
\bibitem{oksendal}
      B. {\O}ksendal, D. W. Stroock, \emph{A characterization of harmonic measure
      and markov processs whose hitting distritributions are preserved by rotations,
      translations and dilatations}, Ann. Inst. Fourier. 32, 4, 221-232, 1982.
\bibitem{Peng3}
      Peng S., \emph{A generalized dynamic programming principle and Hamilton-Jacobi-Bellman equation.}
      Stochastics and Stochastic Reports, 38(2): 119-134, 1992.
\bibitem{Peng1}
      Peng S., \emph {BSDE and related g-expectation}, in Pitman
      Research Notes in Mathematics Series, no. 364, "Backward Stochastic Differential
      Equation", Ed. by N. El Karoui and L.Mazliak, 141-159, 1997.
\bibitem{Peng2}
      Peng S., \emph{Monotonic limit theorem of BSDE and nonlinear decomposition theorem of Doob-Meyer's
      type}, Prob. Theory Rel. Fields 113, no 4, 473-499, 1999.
\bibitem{Peng4}
      Peng S., \emph{Nonlinear expectations, nonlinear evaluations and risk measures.},In: Stochastic Methods
      in Finance. In: Lecture Notes in Mathematics Series. vol.
      1856. Springer-Verlag, pp. 143-217.
\bibitem{Wangwei}
      Wang Wei, \emph{Maximal inequalities for g-martingales},
      Statist. Probab. Lett. 79, 1169-1174.
\end{thebibliography}
\end{document}